\DeclareMathOperator{\GL}{GL}
\DeclareMathOperator{\Cent}{Cent}
\DeclareMathOperator{\Hom}{Hom}
\DeclareMathOperator{\Spec}{Spec}
\DeclareMathOperator{\Gm}{{\mathbf G}_m}
\DeclareMathOperator{\der}{der}
\DeclareMathOperator{\corad}{corad}
\newcommand{\Aff}{\mathbb {A}}
\newcommand{\ad}{{ad}}
\newcommand{\Sm}{Sm}
\DeclareMathOperator{\Sing}{Sing}
\DeclareMathOperator{\A1}{\Aff^1}
\newcommand{\Hcat}{\mathcal{H}^{\A1}}
\DeclareMathOperator{\et}{\text{\it \'et}}
\newtheorem{lem}{Lemma}[section]
\newtheorem*{thm*}{Theorem}
\newtheorem{thm}[lem]{Theorem}
\newtheorem{cor}[lem]{Corollary}
\theoremstyle{definition}{  \newtheorem{rem}[lem]{Remark}  }
\theoremstyle{definition}{  \newtheorem{dfn}[lem]{Definition}  }
\newcommand{\st}{\scriptstyle}
\begin{document}

\title{$\Aff^1$-invariance of non-stable $K_1$-functors in the equicharacteristic case}
\author{Anastasia Stavrova}
\thanks{The author is a winner of the contest ``Young Russian Mathematics''.
The work was supported by RFBR~19-01-00513.}
\address{Chebyshev Laboratory, Department of Mathematics and Computer Science, St. Petersburg State University,
14th Line V.O. 29B, 199178 Saint Petersburg, Russia}
\email{anastasia.stavrova@gmail.com}
\subjclass[2010]{19B99, 20G35,  14L15, 20G99}
\keywords{isotropic reductive group, non-stable $K_1$-functor, Whitehead group, Serre-Grothendieck conjecture}

\selectlanguage{english}
\maketitle

\begin{abstract}
We apply the techniques developed by I. Panin for the proof of the equicharacteristic case of the
Serre--Grothendieck conjecture for isotropic reductive groups
(I. Panin, A. Stavrova, N. Vavilov, 2015; I. Panin, 2019) to obtain
similar injectivity and $\Aff^1$-invariance theorems for
non-stable $K_1$-functors associated to isotropic reductive groups.
Namely, let $G$ be a reductive group over a commutative ring $R$. We say that $G$ has isotropic rank $\ge n$,
if every non-trivial normal semisimple $R$-subgroup of $G$ contains $(\Gm_{,R})^n$. We show that if $G$ has isotropic rank $\ge 2$
and $R$ is a regular domain containing a field, then $K_1^G(R[x])=K_1^G(R)$,
where $K_1^G(R)=G(R)/E(R)$ is the corresponding
non-stable $K_1$-functor, also called the Whitehead group of $G$. If $R$ is, moreover, local, then we show that
$K_1^G(R)\to K_1^G(K)$ is injective, where $K$ is the field of fractions of $R$.
\end{abstract}

\section{Introduction}

Let $R$ be a commutative ring with 1.
Let $G$ be a reductive group scheme over $R$ in the sense of~\cite{SGA3}. We say that $G$ has isotropic rank $\ge n$,
if every non-trivial normal semisimple $R$-subgroup of $G$ contains $(\Gm_{,R})^n$.

If $G$ is not a torus, the assumption that $G$ has isotropic rank $\ge 1$ implies that $G$ contains a
proper parabolic subgroup~\cite[Exp. XXVI, Proposition 6.1]{SGA3}.
For any reductive group $G$ over $R$ and a parabolic subgroup $P$ of $G$,
one defines the elementary subgroup $E_P(R)$ of $G(R)$ as the subgroup
generated by the $R$-points of the unipotent radicals of $P$ and of an opposite parabolic subgroup
$P^-$, and considers the
corresponding non-stable $K_1$-functor $K_1^{G,P}(R)=G(R)/E_P(R)$~\cite{PS,St-poly}. It does not depend on the
choice of $P^-$ by~\cite[Exp. XXVI Cor. 1.8]{SGA3}.
In particular, if $A=k$ is a field and $P$ is minimal, $E(k)$ is nothing but the group $G(k)^+$ introduced
by J. Tits~\cite{Tits64}, and $K_1^G(k)$ is the subject of the Kneser--Tits problem~\cite{Gil}. If $G=\GL_n$
and $P$ is a Borel subgroup,
then $K_1^G(R)=\GL_n(R)/E_n(R)$, $n\ge 1$, are the usual non-stable $K_1$-functors of algebraic $K$-theory.
If $G$ has isotropic
rank $\ge 2$, then $K_1^{G,P}(R)$ is independent of $P$ by the main result of~\cite{PS},
and we denote it by $K_1^G(R)$. If $G$ is a torus, we define $K_1^G(R)=G(R)$ for coherence.
See~\S~\ref{sec:prel} for a formal definition and further properties of $K_1^G$.

In~\cite{St-poly} we proved that
if $G$ is a reductive
group over a field $k$ having isotropic rank $\ge 2$, then
$$K_1^G(k)=K_1^G(k[X_1,\ldots,X_n])\qquad\mbox{for any}\quad n\ge 1.
$$
This implied the following two statements. First, provided that $k$ is perfect, one has $K_1^G(A)=K_1^G(A[x])$ for any
regular ring $A$ containing $k$. Second, provided that $k$ is infinite and perfect, one has
$\ker(K_1^G(A)\to K_1^G(K))=1$ for any regular local ring $A$ containing $k$, where $K$ is the field of fractions of $A$.
Those results generalized several earlier results on split, i.e. Chevalley--Demazure,
reductive groups; see~\cite{St-poly} for a historical survey.

In the present text we show how the techniques developed by I. Panin for the proof of the equicharacteristic case of the
Serre--Grothendieck conjecture~\cite{PaStV,Pa-ICM,Pa-GSfin1,Pa-izvGS}
allow to extend these results to the case where $G$ is defined over a regular ring $A$ containing a field $k$,
but not necessarily over $k$ itself, and $k$ is an arbitrary field. The main results
are the following.

\begin{thm}\label{thm:A1-inv}
Let $A$ be a regular ring containing a field $k$.
Let $G$ be a reductive group scheme over $A$ of isotropic rank $\ge 2$.
Then there is a natural isomorphism
$K_1^G(A)\cong K_1^G(A[x])$.

\end{thm}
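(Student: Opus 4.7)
The natural map $K_1^G(A)\to K_1^G(A[x])$ induced by the inclusion $A\hookrightarrow A[x]$ is split by evaluation at $x=0$, since $E_P$ is functorial in the ring; thus injectivity is immediate. For surjectivity it suffices to prove the following statement: for every $h\in G(A[x])$ with $h(0)=1$, one has $h\in E_P(A[x])$. An arbitrary $g(x)\in G(A[x])$ then satisfies $g(x)g(0)^{-1}\in E_P(A[x])$, giving $[g(x)]=[g(0)]$ in $K_1^G(A[x])$.

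I would first reduce to the case where $A$ is essentially smooth over $k$ and local. By Popescu's desingularization, $A$ is a filtered colimit of smooth finitely generated $k$-subalgebras $A_i$. Since reductive group schemes of a fixed type are finitely presented, $G$ descends to some $G_i$ over $A_i$; the isotropic rank condition, being witnessed by finitely many closed embeddings of $\Gm$ into the simple factors, also descends after possibly enlarging $i$. The element $h$ likewise descends to $h_i\in G_i(A_i[x])$ with $h_i(0)=1$. A Quillen-type patching argument applied over the regular base $A_i$ then reduces the theorem to the case where $A$ itself is the local ring $\mathcal O_{X,x}$ at a closed point of a smooth affine $k$-variety $X$.

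For such local rings, the heart of the argument is Panin's geometric presentation technology from~\cite{PaStV,Pa-GSfin1,Pa-ICM}. One produces an étale neighbourhood of $x\in X$ together with a finite surjective morphism to $\Aff^1_B$ for a smaller regular base $B$, arranged so that $G$ extends with its isotropic structure intact and so that the datum $h$ can be controlled on the fibres of the projection. A Horrocks-type factorization in the $\Aff^1$-direction, combined with the field-case theorem of~\cite{St-poly} over the residue field of $B$, then transfers the problem to one that is already known, yielding $h\in E_P(A[x])$.

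The main obstacle is the geometric presentation step: one must construct the étale neighbourhood so that $G$ extends together with a chosen parabolic subgroup witnessing isotropic rank $\ge 2$, while simultaneously controlling the specialisation of $h$ along the morphism to $\Aff^1_B$. The flexibility needed is exactly what is provided by Panin's strategy for the Serre--Grothendieck conjecture, while the $P$-independence of $K_1^{G,P}$ from~\cite{PS} is what allows the local presentations produced at different points of $X$ to be compatibly assembled into the global $\Aff^1$-invariance statement.
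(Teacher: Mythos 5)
Your reduction steps are essentially those of the paper, but the heart of the argument --- the local case --- is left as a gesture rather than a proof, and as described it does not assemble into one. Two smaller points first. The appeal to Popescu requires $k\to A$ to be geometrically regular, which is why the paper first replaces $k$ by its prime field (a finite field or $\QQ$), which is perfect; over an imperfect $k$ a regular $k$-algebra need not be a filtered colimit of smooth $k$-algebras. Your ``Quillen-type patching'' is exactly Lemma~\ref{lem:PS-17}, so that step is fine. The genuine gap is in the local step: you invoke ``a finite surjective morphism to $\Aff^1_B$ for a smaller regular base $B$,'' a ``Horrocks-type factorization,'' and ``the field-case theorem over the residue field of $B$,'' but you never say how these produce the conclusion $h\in E_P(A[x])$, and this combination is not how the argument actually closes. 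In particular there is no Horrocks-type statement for $K_1^G$ available here, and the field case is not applied over a residue field.

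What the paper does instead is prove an injectivity theorem with an auxiliary coefficient ring: for $A$ a semilocal ring of closed points on a smooth $k$-variety with fraction field $K$, the map $K_1^G(A\otimes_k B)\to K_1^G(K\otimes_k B)$ has trivial kernel for \emph{every} Noetherian $k$-algebra $B$ (Theorem~\ref{thm:main}). This is where Panin's geometric presentation enters: Theorem~\ref{thm:panin} furnishes an elementary distinguished Nisnevich square relating $X_f$ to $\Aff^1\times U$ together with the group scheme isomorphism $\Phi$, the Mayer--Vietoris-type exactness of Lemma~\ref{lem:Nis-square} moves the class to $\ker\bigl(K_1^G(A\otimes_k B[x])\to K_1^G(A\otimes_k B[x]_h)\bigr)$ with $h$ monic, and Lemma~\ref{lem:inj-f} kills it. The point of carrying the extra $B$ is precisely that one can then take $B=k[x]$ and $B=k$: both $K_1^G(R_m)\to K_1^G(K)$ and $K_1^G(R_m[x])\to K_1^G(K[x])$ are injective, and the known field case $K_1^G(K)=K_1^G(K[x])$ of~\cite{St-poly} (over the fraction field $K$, not a residue field) forces $K_1^G(R_m)=K_1^G(R_m[x])$. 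If you want to salvage your outline, replace the Horrocks step by this injectivity-into-the-generic-fibre argument, stated with the auxiliary ring $B$ from the start.
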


\begin{thm}\label{thm:inj}
Let $A$ be a semilocal regular domain containing a field $k$, and let $K$ be the field of fractions of $A$.
 Let $G$ be a reductive group scheme over $A$ of isotropic rank $\ge 2$.
Then the natural homomorphism $K_1^G(A)\to K_1^G(K)$ is injective.
\end{thm}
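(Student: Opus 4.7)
The plan is to combine Popescu's theorem with Panin's geometric presentation lemma and the $\Aff^1$-invariance supplied by Theorem~\ref{thm:A1-inv}, following the template developed for the geometric Serre--Grothendieck conjecture in \cite{PaStV, Pa-GSfin1}.

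First I would reduce to the geometric case. Given $g \in G(A)$ whose image in $K_1^G(K)$ is trivial, one has $g \in E(K)$, and since elementary subgroups commute with filtered colimits there is $f \in A \setminus \{0\}$ with $g \in E(A_f)$. By Popescu's theorem $A$ is a filtered colimit of smooth finite-type $k$-algebras, and standard limit arguments for reductive group schemes and their elementary subgroups then spread the data $(G, g, f)$ to a smooth affine $k$-variety $X = \Spec B$, in such a way that $A$ is the semilocalization of $B$ at a finite set $S$ of closed points and $g|_{X_f} \in E(X_f)$ after possibly shrinking $X$ to a Zariski neighborhood of $S$.

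Next I would invoke Panin's geometric presentation theorem from \cite{PaStV, Pa-GSfin1} applied to the triple $(X, S, Z := V(f))$. After replacing $X$ by a Nisnevich neighborhood of $S$, this produces an essentially smooth $k$-scheme $U$ and a finite surjective morphism $q : X \to \Aff^1_U$ which is étale in a Zariski neighborhood of $Z$, restricts to a closed immersion $Z \hookrightarrow \{0\}_U$, and satisfies $q^{-1}(\{0\}_U) = Z \sqcup Z'$ with $Z' \cap S = \emptyset$. Using the finite étaleness of $q$ near $Z$ together with the disjointness of $Z'$ from $S$, I would transport the elementary trivialization of $g$ on $X_f$ through $q$ in order to assemble an element $h(t) \in G(A[t])$ with $h(1) \in E(A)$ and $h(0) \equiv g \pmod{E(A)}$. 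Theorem~\ref{thm:A1-inv} then identifies $K_1^G(A[t])$ with $K_1^G(A)$, forcing $[h(0)] = [h(1)]$ in $K_1^G(A)$ and hence $g \in E(A)$.

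The main obstacle is the construction of the $\Aff^1$-family $h(t)$ in the last step. Since the reductive group scheme $G$ and the element $g$ are defined only over $X$, and not over the base $\Aff^1_U$, one cannot simply invoke $\Aff^1$-invariance on $\Aff^1_U$ and must instead use Panin's presentation as a geometric patching device: the finite étale neighborhood of $Z$ in $X$ above its image in $\Aff^1_U$ allows one to glue the elementary trivialization of $g$ on $X_f$ to a ``trivial'' piece supported near $Z'$, producing an honest family over $A[t]$ with controlled specializations at $t = 0$ and $t = 1$. Carrying out this patching so that both specializations land in the intended subgroups is the delicate technical step; a minor auxiliary issue is the handling of the case when $k$ is a finite field, which is typically dispatched by an additional limit or transfer argument as in \cite{Pa-GSfin1}.
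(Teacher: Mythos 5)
Your overall strategy (Popescu's theorem plus Panin's geometric presentation plus an argument over a polynomial ring) is the right one, but the proposal has a genuine gap exactly at the point you yourself flag as ``delicate'': the construction of the family $h(t)$. The paper does not attempt the patching you describe by hand. It uses the specific form of Panin's theorem (Theorem~\ref{thm:panin}): an elementary distinguished Nisnevich square relating $\Aff^1\times U$ to $X$ through an intermediate scheme $Y$, \emph{together with condition (v)}, a $Y$-group scheme isomorphism $\Phi\colon p_U^*(G_U)\to p_X^*(G)$ with $\delta^*(\Phi)=\mathrm{id}$. It is precisely this isomorphism that lets one regard the class of $g$, trivialized on $X_f$, as a class for the constant group $G_U$ on the Nisnevich square; Lemma~\ref{lem:Nis-square} (Nisnevich excision for $K_1^G$) then produces $\tilde g(x)\in\ker\bigl(K_1^G(A[x])\to K_1^G(A[x]_h)\bigr)$ with $\tilde g(0)=g$. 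The version of the presentation lemma you invoke (a finite surjective $q\colon X\to\Aff^1_U$, \'etale near $Z$) carries no such group-scheme identification, and without it the ``transport'' of the elementary trivialization through $q$ has no meaning, since $E(-)$ is defined relative to $G$ and $G$ does not descend to $\Aff^1_U$. So the central step of your proof is not merely technical but missing, and the missing ingredient is exactly Theorem~\ref{thm:panin}(v) combined with Lemma~\ref{lem:Nis-square}.

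Two further points. First, your concluding step appeals to Theorem~\ref{thm:A1-inv}; in the paper that theorem is itself deduced from the injectivity result (via Theorem~\ref{thm:main}), so this ordering is circular, and in any case unnecessary: since $h$ is monic, Lemma~\ref{lem:inj-f} gives $\tilde g=1$ directly, with no need to compare specializations at $t=0$ and $t=1$. Second, your reduction asserts that $A$ \emph{is} the semilocalization of a smooth $k$-algebra $B$ at finitely many closed points; this is false in general, since the residue fields of $A$ need not be finitely generated over $k$. The paper instead proves the geometric statement (Theorem~\ref{thm:main}) for semilocal rings of finitely many closed points on a smooth variety, then observes that $g$ comes from some $g'\in K_1^G(R)$ for a smooth domain $R$, that $g'$ dies in every semilocalization of $R$ at finitely many maximal ideals, hence at finitely many \emph{prime} ideals, and that $K_1^G(R)\to K_1^G(A)$ factors through such a semilocalization. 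This extra step is needed to make the limit argument honest.
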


As a corollary of these two theorems, we also obtain some results on the values of non-stable $K_1$-functors on
Laurent polynomials and power series rings; see Corollary~\ref{cor:Lau} in~\S~\ref{sec:main}
and~\S~\ref{sec:ser}. In the end of the paper we apply the general results of A. Asok,
M. Hoyois and M. Wendt~\cite{AHW} to give an interpretation of
$K_1^G$ in terms of the $A^1$-homotopy theory of F. Morel and V. Voevodsky~\cite{MoV}; see Theorem~\ref{thm:repr}.

\section{Properties of $K_1^G$ over general commutative rings}\label{sec:prel}

Let $R$ be a commutative ring with 1. Let $G$ be a reductive group scheme over $R$, and
let $\pi:G\to G^{\ad}=G/\Cent(G)$ be the canonical homomorphism of $G$
onto its adjoint group. The correspondence $H\mapsto \pi(H)$
between semisimple normal subgroups of $G$ and $G^{\ad}$ is bijective, with the
inverse correspondence $H\mapsto\der(\pi^{-1}(T\cdot H))$, where $T$ is any fixed maximal torus of $G^{\ad}$
and $\der$ denotes the derived subgroup of a reductive group
in the sense of~\cite[Exp. XXII, \S 6]{SGA3}. The bijectivity follows by faithfully flat descent
from~\cite[Exp. XXII, Lemme 5.1.5, Lemme 5.2.7, Corollaire 5.3.5]{SGA3}. In particular, $G$ has isotropic rank $\ge n$
if and only if $G^{\ad}$ has isotropic rank $\ge n$.

For adjoint groups, semisimple normal subgroups can be described more explicitly.
By~\cite[Exp. XXIV, Proposition 5.10]{SGA3} $G^{\ad}$ is isomorphic to a direct product of Weil restrictions
$G_i=\prod_{R'_i/R}H_i$, where $R'_i/R$ is finite \'etale and $H_i$ is an adjoint simple group over $R'_i$.
We can assume without loss of generality that every $G_i$ cannot be decomposed further into a product of such factors.
Then every non-trivial semisimple normal subgroup of $G^{\ad}$ is a direct product of several factors of this
decomposition.
By adjunction of the Weil restriction and base change, the fact that $G_i$ contains $(\Gm_{,R})^n$ implies
that $H_i$ contains $(\Gm_{,R'_i})^n$. The converse is also true, since
$(\Gm_{,R})^n$ is a subgroup of $\prod_{R'_i/R}(\Gm_{,R'_i})^n$.
Therefore, for any ring homomorphism $R\to S$,
if $G$ has isotropic rank $\ge n$, $G_S$ also does, since it is a product of Weil restrictions of groups
$(H_i)_S$ (cf.~\cite[\S 7.6]{Neron-book}).

Let $P$ be a parabolic subgroup of $G$ in the sense of~\cite{SGA3}.
Since the base $\Spec R$ is affine, the group $P$ has a Levi subgroup $L_P$~\cite[Exp.~XXVI Cor.~2.3]{SGA3}.
There is a unique parabolic subgroup $P^-$ in $G$ which is opposite to $P$ with respect to $L_P$,
that is $P^-\cap P=L_P$, cf.~\cite[Exp. XXVI Th. 4.3.2]{SGA3}.  We denote by $U_P$ and $U_{P^-}$ the unipotent
radicals of $P$ and $P^-$ respectively.

\begin{dfn}\label{defn:E_P}\cite{PS}
The \emph{elementary subgroup $E_P(R)$ corresponding to $P$} is the subgroup of $G(R)$
generated as an abstract group by $U_P(R)$ and $U_{P^-}(R)$. We denote by $K_1^{G,P}(R)=G(R)/E_P(R)$ the pointed set of
cosets $gE_P(R)$, $g\in G(R)$.
\end{dfn}

Note that if $L'_P$ is another Levi subgroup of $P$,
then $L'_P$ and $L_P$ are conjugate by an element $u\in U_P(R)$~\cite[Exp. XXVI Cor. 1.8]{SGA3}, hence
the group
$E_P(R)$ and the set $K_1^{G,P}(R)$ do not depend on the choice of a Levi subgroup or an opposite subgroup
$P^-$ (and so we do not include $P^-$ in the notation).

\begin{dfn}
A parabolic subgroup $P$ in $G$ is called
\emph{strictly proper}, if it intersects every non-trivial normal semisimple subgroup of $G$ properly.
\end{dfn}

Similarly to the case of semisimple normal subgroups,
the correspondence $P\mapsto \pi(P)$ between parabolic subgroups of $G$ and $G^{\ad}$
is bijective (cf.~\cite[Exp. XXVI, \S 3]{SGA3}). Consequently, $G$ has a strictly proper parabolic subgroup if
and only if $G^{\ad}$ does.

If $G$ has isotropic rank $\ge 1$, and $G$ is not a torus, then $G$ contains
a strictly proper parabolic $R$-subgroup $P$. Indeed,
by the above remarks, we can assume that $G=G^{\ad}$ is an adjoint
reductive group isomorphic to a direct product of Weil restrictions
$G_i=\prod_{R'_i/R}H_i$, where $R'_i/R$ is finite \'etale and $H_i$ is an adjoint simple group over $R'_i$. Let $S\le G$ be the 1-dimensional split subtorus embedded diagonally into the product of 1-dimensional split subtori
in $G_i$ that exist by assumption. By~\cite[Exp. XXVI, Proposition 6.1]{SGA3} there is a
parabolic subgroup $P$ of $G$ such that the centralizer $L=\Cent_G(S)$ of $S$ in $G$ is a Levi subgroup of $P$.
Since $S$ acts faithfully on every $G_i$, the subgroup $L$, and hence $P$,
intersects properly every factor $G_i$, and hence every semisimple normal subgroup of $G$.

If $G$ has isotropic rank $\ge 2$, then for any strictly parabolic subgroup $P$, the functor $K_1^{G,P}$ is group-valued
and independent of $P$, as evidenced by the the following result.

\begin{thm}\label{th:PS-normality}\cite[Lemma 12, Theorem 1]{PS}
Let $G$ be a reductive group over a commutative ring $R$, and let $A$ be a commutative $R$-algebra.
If for any maximal ideal $m$ of $R$ the isotropic rank of $G_{R_m}$ is $\ge 2$,
then the subgroup $E_P(A)$ of $G(A)$ is the same for any
strictly proper parabolic $A$-subgroup $P$ of $G_A$, and is normal in $G(A)$.
\end{thm}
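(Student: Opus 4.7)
The plan, following \cite{PS}, is to introduce relative root subgroups attached to each strictly proper parabolic and derive both conclusions from Chevalley-type commutator formulas. Given a strictly proper parabolic $P$ of $G_A$ with Levi $L_P$, the centre of $L_P$ contains a canonical maximal split subtorus $S$; decomposing $\Lie(G_A)$ into $S$-weight spaces yields a relative root system $\Phi$ and, for each $\alpha\in\Phi$, a smooth unipotent root subgroup $U_\alpha\subseteq G_A$. The unipotent radicals $U_P$ and $U_{P^-}$ factor, in any fixed order, as products of the $U_\alpha$ corresponding respectively to positive and negative roots, so that $E_P(A)$ is generated as an abstract group by the values $U_\alpha(A)$ for $\alpha\in\Phi\setminus\Phi_{L_P}$.

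The technical core is to establish, for any two non-proportional relative roots $\alpha,\beta$, a commutator inclusion
\[
[U_\alpha(A),\,U_\beta(A)] \subseteq \langle U_{i\alpha+j\beta}(A) : i,j>0\rangle.
\]
Zariski-locally on $\Spec A$ one passes to an \'etale cover that splits $G$, whereupon this reduces to the classical Chevalley commutator formulas; the resulting relations then descend to $A$ using Galois equivariance of the root subgroups. The hypothesis that $G$ has isotropic rank $\ge 2$ at every maximal ideal of $R$ enters here to guarantee that, for every non-central relative root $\alpha$, there exists a non-proportional $\beta$ with $\alpha+\beta$ again a root; as a consequence, each $U_\alpha(A)$ with $\alpha\notin\Phi_{L_P}$ can be written as a product of iterated commutators of elements of $U_P(A)$ and $U_{P^-}(A)$. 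This gives an intrinsic description of $E_P(A)$ as the subgroup generated by all $U_\alpha(A)$, $\alpha\notin\Phi_{L_P}$, which depends only on the relative root datum of $G_A$; comparing the root data of two strictly proper parabolics $P$ and $Q$ via a common refinement then yields $E_P(A)=E_Q(A)$. Normality follows because conjugation of each $U_\alpha(A)$ by an arbitrary $g\in G(A)$ can be reduced, by a Bruhat-type argument combined once more with the commutator formulas, to a product of root-subgroup elements.

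The main obstacle I anticipate is the verification of the commutator formulas themselves in the generality of an arbitrary commutative ring $A$ and a possibly non-split group $G$. Non-reduced relative root systems of type $\mathrm{BC}$ are particularly delicate, since the root subgroups attached to short and double roots are non-abelian and interact through nontrivial structure maps. A secondary subtlety is that the rank hypothesis concerns the fibres of $R$ rather than those of $A$; one must check that every strictly proper parabolic $P$ of $G_A$ inherits a relative root system rich enough to trigger the commutator step, which is handled by tracing $P$ back to the parabolic data for $G$ over $R$ using the theory of parabolic subgroups from \cite{SGA3}.
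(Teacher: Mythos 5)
First, note that the paper itself offers no proof of this statement: it is imported verbatim from \cite{PS} (Lemma~12 and Theorem~1 there), so your attempt has to be measured against the argument in that reference. Your first half is faithful to it in spirit: \cite{PS} does introduce relative root subschemes attached to a parabolic with a chosen Levi, does prove a generalized Chevalley commutator formula $[X_\alpha(u),X_\beta(v)]=\prod_{i,j>0}X_{i\alpha+j\beta}(N_{\alpha\beta ij}(u,v))$ by reduction to the split case after a faithfully flat base change, and does use the isotropic rank $\ge 2$ hypothesis exactly as you say, to write each $X_\alpha(A)$ as a product of commutators of root elements for non-proportional roots. Your worry about non-reduced (type $\mathrm{BC}$) relative root systems is also well placed.

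The genuine gap is in your treatment of normality and, to a lesser extent, of the independence of $P$. Over a general commutative ring $A$ there is no Bruhat decomposition: $G(A)$ is not a union of double cosets of a parabolic, is not generated by root subgroups together with a torus normalizer, and this already fails for $\GL_n$ over a local ring. So ``conjugation of each $U_\alpha(A)$ by an arbitrary $g\in G(A)$ can be reduced, by a Bruhat-type argument, to a product of root-subgroup elements'' is not an available step, and no amount of commutator calculus will repair it, since the generators of $E_P(A)$ simply do not reach all of $G(A)$. The proof in \cite{PS} instead runs a Quillen--Suslin--Taddei style localization: one replaces the element to be conjugated by a one-parameter family such as $g\,X_\alpha(tv)\,g^{-1}\in G(A[t])$, which is trivial at $t=0$; one proves membership in the elementary subgroup over each localization $A_m[t]$, where one can use conjugacy of parabolic subgroups of the same type and a Gauss-type decomposition relative to the big cell over a local base; and one then patches via the local-global principle for $E_P$ (the statement quoted in this paper as Lemma~\ref{lem:PS-17}) before specializing $t=1$. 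The same mechanism is needed for the independence of $P$: two strictly proper parabolics of $G_A$ need not admit a common refinement (two opposite parabolics intersect in a Levi, not in a parabolic), so your ``common refinement'' step only works after reducing to the local case, where parabolics of the same type are conjugate by \cite{SGA3}, and then globalizing again by patching. Without this localization layer the proposal does not close.
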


\begin{dfn}
Let $G$ be a reductive group of isotropic rank $\ge 2$ over a commutative ring $R$.
If $G$ is not a torus, then
for any $R$-algebra $A$,
we call the subgroup $E(A)=E_P(A)$, where $P$ is a strictly proper parabolic subgroup of $G$ over $R$,
the \emph{elementary subgroup}  of $G(A)$. If $G$ is a torus, we set $E(A)=1$.
The functor $K_1^G$ on the category of commutative $R$-algebras $A$, given by $K_1^G(A)=G(A)/E(A)$,
is called the \emph{non-stable $K_1$-functor} associated to $G$.
\end{dfn}

We will use the following two properties of $K_1^G$ established in~\cite{St-poly,St-serr}.
The following lemma was established in~\cite[Corollary 5.7]{Sus} for $G=\GL_n$ (the proof goes through for any
torus $G$ without any changes).

\begin{lem}\label{lem:inj-f}\cite[Lemma 2.7]{St-serr}
Let $A$ be a commutative ring, and let $G$ be a reductive group scheme over $A$ of isotropic rank $\ge 1$.
Assume moreover that for every maximal ideal $\mathfrak{m}\subseteq A$, the reductive $A_m$-group
$G_{A_\mathfrak{m}}$ has isotropic rank $\ge 2$.
Then for any monic polynomial $f\in A[x]$ the natural homomorphism
$$
K_1^G(A[x])\to K_1^G(A[x]_f)
$$
is injective.
\end{lem}

The following statement was proved for $G=\GL_n$, $n\ge 3$, by A. Suslin~\cite[Th. 3.1]{Sus}.
For the case of split semisimple groups the same result was obtained by E. Abe~\cite[Th. 1.15]{Abe}.

\begin{lem}\label{lem:PS-17}\cite[Lemma 17]{PS}
Let $A$ be a commutative ring, and let $G$ be a reductive group scheme over $A$ of
isotropic rank $\ge 1$.
Assume moreover that for every maximal ideal $\mathfrak{m}\subseteq A$, the reductive $A_m$-group
$G_{A_\mathfrak{m}}$ has isotropic rank $\ge 2$.
Then for any $g(X)\in G(A[X])$ such that $g(0)\in E(A)$ and
$F_m(g(X))\in E(A_m[X])$ for all maximal ideals $m$ of $A$, one has $g(X)\in E(A[X])$.
\end{lem}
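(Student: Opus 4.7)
The plan is to follow Suslin's local-global argument for non-stable $K_1$, adapted to the isotropic reductive setting. Replacing $g(X)$ by $g(X)g(0)^{-1}$, which does not affect the conclusion since $E(A)\subseteq E(A[X])$, we may assume $g(0)=1$. Introduce a second indeterminate $Y$ and the Suslin-type set
$$J=\bigl\{a\in A\,\bigm|\, g(X+aY)\,g(X)^{-1}\in E(A[X,Y])\bigr\}\subseteq A.$$
If one shows that $J=A$, then taking $a=1$ and specializing $X\mapsto 0$, $Y\mapsto X$ yields $g(X)\in E(A[X])$, as required.

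The first task is to verify that $J$ is an ideal of $A$. Closure under $A$-multiplication follows by applying the substitution $Y\mapsto bY$ to the defining relation. Closure under addition follows from the factorization
$$g(X+(a+a')Y)\,g(X)^{-1}=\bigl[g(X+a'Y+aY)\,g(X+a'Y)^{-1}\bigr]\cdot\bigl[g(X+a'Y)\,g(X)^{-1}\bigr],$$
in which the first bracket lies in $E(A[X,Y])$ by applying the substitution $X\mapsto X+a'Y$ to the relation witnessing $a\in J$, and the second lies in $E(A[X,Y])$ because $a'\in J$.

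The crucial step is to show that $J\not\subseteq m$ for every maximal ideal $m\subseteq A$. Using the hypothesis $F_m(g(X))\in E(A_m[X])$, write this element as a finite product of root-subgroup elements $x_{\alpha_i}(u_i(X))$ with $u_i\in A_m[X]$, relative to a strictly proper parabolic subgroup of $G_{A_m}$. Clearing denominators, one finds $s\in A\setminus m$ such that $F_s(g(X))=\prod_i x_{\alpha_i}(v_i(X))\in E(A_s[X])$, with $v_i\in A_s[X]$. The \emph{Suslin trick} then shows that for $N$ larger than all $s$-denominators occurring in the $v_i$, the element $g(X+s^NY)\,g(X)^{-1}$ lies in $E(A[X,Y])$: its image in $G(A_s[X,Y])$ can be rewritten, using the Chevalley-type commutator relations for the root subgroups attached to a strictly proper parabolic and the normality of $E$ (Theorem~\ref{th:PS-normality}), as a product of conjugates of $x_{\alpha_i}\bigl(v_i(X+s^NY)-v_i(X)\bigr)$; for the chosen $N$ the polynomials $v_i(X+s^NY)-v_i(X)$ are divisible by $s^N$ with coefficients in $A$, hence lie in $A[X,Y]$. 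Thus $s^N\in J$; varying $m$, the ideal $J$ is not contained in any maximal ideal, so $J=A$.

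The main obstacle is the Suslin trick itself, i.e., the commutator bookkeeping that converts an element only known to be elementary over $A_s[X,Y]$ into a product of root unipotents whose parameters lie in $A[X,Y]$. For $G=\GL_n$ this is Suslin's original computation~\cite[Th.~3.1]{Sus}, where row-operation manipulations make the rearrangement transparent; for an arbitrary isotropic reductive group it requires the analogue of the Chevalley commutator formulas for the root subgroups of a strictly proper parabolic, together with the normality of $E(A[X,Y])$ in $G(A[X,Y])$ to move conjugating factors to the outside and collect the resulting product inside the elementary subgroup.
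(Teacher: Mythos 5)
The paper does not actually prove this lemma; it imports it verbatim from \cite[Lemma 17]{PS}, whose proof is exactly the Quillen--Suslin local--global scheme you follow (modelled on \cite[Th.~3.1]{Sus} and \cite[Th.~1.15]{Abe}). Your skeleton is the right one and matches that source: the reduction to $g(0)=1$, the verification that $J=\{a\in A:\ g(X+aY)g(X)^{-1}\in E(A[X,Y])\}$ is an ideal (both the substitution $Y\mapsto bY$ and the telescoping factorization for sums are fine), and the final specialization $X\mapsto 0$, $Y\mapsto X$.

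The gap is that the step you yourself label ``the main obstacle'' is the entire content of the lemma, and the sentence you offer for it does not close it; two distinct difficulties are elided. First, after the telescoping rearrangement, $F_s\bigl(g(X+s^NY)\,g(X)^{-1}\bigr)$ is a product of conjugates $c_j\,x_{\alpha_j}\bigl(v_j(X+s^NY)-v_j(X)\bigr)\,c_j^{-1}$ in which the conjugators $c_j$ are partial products lying in $E(A_s[X,Y])$ and still carry denominators; observing that the arguments $v_j(X+s^NY)-v_j(X)$ lie in $s^{N-M}A[X,Y]$ says nothing about the conjugates themselves. What is needed is a quantitative conjugation lemma --- the analogue of Suslin's Lemmas 1.3 and 3.2, which in \cite{PS} occupies several preparatory lemmas --- asserting that a conjugate of a relative root element with parameter in $s^{2N}A[X,Y]$ by an element with denominators bounded by $s^{N}$ descends to $E(A[X,Y])$. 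Proving that requires the generalized Chevalley commutator formula for the relative root subgroups $U_\phi$ attached to a strictly proper parabolic (these are not genuine root subgroups: $U_\phi$ may be non-abelian and $x_\phi(w)x_\phi(v)^{-1}\neq x_\phi(w-v)$ in general), and it is there, rather than in the normality statement of Theorem~\ref{th:PS-normality}, that the isotropic rank $\ge 2$ hypothesis does its real work. Second, even granting all of that, you only control the image of $g(X+s^NY)g(X)^{-1}$ in $G(A_s[X,Y])$; since $A\to A_s$ need not be injective, knowing that this image lies in $F_s(E(A[X,Y]))$ does not yield membership in $E(A[X,Y])$ itself. The standard proofs handle this by formulating the conjugation lemma so that its output is an honest, explicitly constructed element of $E(A[X,Y])$ (trivial at $Y=0$) compared with $g(X+s^NY)g(X)^{-1}$ through the congruence machinery, not by descending an identity from $A_s[X,Y]$. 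As written, your argument establishes the lemma only modulo the conjugation lemma and only when $A\to A_s$ is injective for the relevant $s$.
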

\begin{proof}
If $G$ is not a torus, the claim is proved as in the proof of~\cite[Lemma 17]{PS}. If $G$ is a torus, then the claim follows from the
fact that $G$ is a sheaf for Zariski topology.
\end{proof}

The following lemma is a straightforward extension
of~\cite[Lemma 2.4]{Vo} for $G=\GL_n$ and~\cite[Lemma 3.7]{Abe} for split reductive groups.

\begin{lem}\label{lem:Nis-square}
Let $G$ be a reductive group of isotropic rank $\ge 2$ over a Noetherian commutative ring $B$.
Let $\phi:B\to A$ be a homomorphism of commutative rings, and $h\in B$ be
such that $\phi:B/hB\to A/\phi(h)A$ is an isomorphism and the restriction of $h$ to every connected component of $B$
is non-nilpotent.
Assume moreover that the commutative square
\begin{equation*}
\xymatrix@R=15pt@C=20pt{
\Spec A_{\phi(h)}\ar[r]^{F_{\phi(h)}}\ar[d]^{\phi}&\Spec A\ar[d]^{\phi}\\
\Spec B_h\ar[r]^{F_h}&\Spec B\\
}
\end{equation*}
is a distinguished Nisnevich square in the sense of~\cite[Def. 3.1.3]{MoV}.
Then the sequence of pointed sets
$$
K_1^G(B)\xrightarrow{\st (F_h,\phi)} K_1^G(B_h)\times K_1^G(A)\xrightarrow{\st (g_1,g_2)\mapsto \phi(g_1)
F_f(g_2)^{-1}}K_1^G(A_{\phi(h)})
$$
is exact.
\end{lem}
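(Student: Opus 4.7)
The plan is to combine two ingredients. First, because $G$ is an affine group scheme, its functor of points is a Nisnevich sheaf, and the given distinguished Nisnevich square yields the pullback identification
\[
G(B) \;=\; G(B_h) \times_{G(A_f)} G(A).
\]
Second, and more delicately, one needs a decomposition property for the elementary subgroup: every element $e\in E(A_f)$ can be written in the form $e = \phi(e_1) \cdot F_f(e_2)^{-1}$ with $e_1 \in E(B_h)$ and $e_2 \in E(A)$.

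Granted this decomposition, the required exactness is immediate. Suppose $g_1 \in G(B_h)$ and $g_2 \in G(A)$ satisfy $\phi(g_1) F_f(g_2)^{-1} = \phi(e_1) F_f(e_2)^{-1}$ in $E(A_f)$. Then $\phi(e_1^{-1} g_1) = F_f(e_2^{-1} g_2)$ in $G(A_f)$, and the $G$-patching property produces $g \in G(B)$ whose restrictions satisfy $F_h(g) = e_1^{-1} g_1 \in g_1 E(B_h)$ and $\phi(g) = e_2^{-1} g_2 \in g_2 E(A)$, as required.

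To establish the decomposition, I would begin from the additive Mayer-Vietoris sequence of $B$-modules attached to the distinguished Nisnevich square,
\[
0 \to B \to B_h \oplus A \to A_f \to 0.
\]
Extending this to $\Ga^n$ coordinatewise, and then by induction on the upper central series to a general smooth unipotent $B$-group $U$ whose underlying scheme is an affine space (each central graded piece being some $\Ga^k$), one obtains
\[
U(A_f) \;=\; \phi(U(B_h)) \cdot F_f(U(A)).
\]
Applied to $U = U_P$ and $U = U_{P^-}$ for a strictly proper parabolic $P$ of $G$ over $B$, this shows that every generator of $E(A_f)$ already lies in $\phi(E(B_h)) \cdot F_f(E(A))$.

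The main obstacle is the final step, which passes from generators to arbitrary elements of $E(A_f)$. One must reduce a word of the form $\phi(e_1^{(1)}) F_f(e_2^{(1)}) \cdots \phi(e_1^{(k)}) F_f(e_2^{(k)})$ to the two-factor shape. The strategy is an induction on $k$, using the normality of $E$ (Theorem~\ref{th:PS-normality}) to absorb the commutators $[F_f(e_2^{(i)}), \phi(e_1^{(j)})]$ that arise when moving $\phi$-factors to the left, together with the isotropic Chevalley commutator formulas from~\cite{PS}, which express such commutators as products of root elements of strictly greater height that themselves admit the desired decomposition. Showing that this rearrangement terminates, rather than producing ever-longer corrections, is the technical heart of the argument.
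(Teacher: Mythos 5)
Your overall architecture is the same as that of the cited proof: the sheaf (patching) property $G(B)=G(B_h)\times_{G(A_f)}G(A)$ for the distinguished Nisnevich square, plus normality of $E$, reduces everything to the decomposition $E(A_f)\subseteq \phi(E(B_h))\cdot F_f(E(A))$, and your derivation of exactness from that decomposition is correct. The additive Mayer--Vietoris sequence and the induction along the filtration of $U_P$ are also fine and give the decomposition for the generators. But the step you yourself flag as the ``technical heart'' --- passing from generators to arbitrary words --- is a genuine gap, and the mechanism you propose for it would not work. The group $E(A_f)$ is generated by the two \emph{opposite} unipotent radicals $U_P$ and $U_{P^-}$; the commutator of a root element of $U_P$ with one of $U_{P^-}$ is not governed by a Chevalley commutator formula, does not land in a subgroup of ``strictly greater height'', and need not even be unipotent. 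So the proposed induction on height has no reason to terminate: the corrections produced when you move $F_f$-factors past $\phi$-factors are general elements of $E(A_f)$, and you are back where you started.

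The actual argument replaces this rearrangement by a quantitative ``conjugation calculus''. One first sharpens the additive statement to $A_f=\phi(B_h)+F_f(f^N A)$ for \emph{every} $N\ge 1$ (using $B/h^NB\cong A/f^NA$, which follows from the \'etaleness in the distinguished square). Writing the given $\varepsilon\in E(A_f)$ as a finite word in root elements and splitting each parameter accordingly, all the $A$-side factors can be chosen to lie in the congruence subgroup generated by root elements with parameters in $f^NA$. The key lemma --- the isotropic analogue of Suslin's Lemma 3.3, proved in \cite{PS,St-poly} via the relative root subgroups and commutator formulas \emph{within} each unipotent radical --- says that for any finite collection of elements $g\in G(A_f)$ and any $n$ there is $N$ such that $g\,F_f\bigl(E(A,f^NA)\bigr)\,g^{-1}\subseteq F_f\bigl(E(A,f^nA)\bigr)\subseteq F_f(E(A))$. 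Taking $N$ large enough for the finitely many partial products occurring in the word, one sweeps all $A$-side factors to the right in a single pass, each conjugate staying inside $F_f(E(A))$, which yields $\varepsilon\in\phi(E(B_h))\cdot F_f(E(A))$. Without this uniform bound on how conjugation distorts congruence depth, your decomposition claim remains unproved, so the proposal as written does not constitute a proof.
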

\begin{proof}
If $G$ is a torus, then $K_1^G(-)=G(-)$ by definition, and the claim follows immediately from the fact that $G$ is a sheaf for the fpqc
topology~\cite[Theorem 2.55]{FGA-explained}.
If $G$ is not a torus, we use~\cite[Corollary 3.5]{St-poly}.
In order to use this result,
we only need to check that $G$ has a strictly proper parabolic subgroup $P$ such that
the system
of relative roots $\Phi_P$ in the sense of~\cite{St-poly,St-serr} is has rank $\ge 2$ everywhere on $B$.
Since $B$ is Noetherian, $h$ is non-nilpotent on every connected component of $B$, and $K_1^G$ commutes with
finite direct products of rings, we can check
the claim of the lemma individually for every connected component of $B$. In other words, we can assume that
$B$ is connected. Let $G^{\ad}=G/\Cent(G)$ be the adjoint group corresponding to $G$.
The construction of $\Phi_P$ in~\cite[Lemma 3.6 and Definition 3.7]{St-serr} implies that $\Phi_P$ the same for
$P$ and $P/\Cent(G)$. Consequently, we can assume that $G=G^{\ad}$ is an adjoint
reductive group. Then $G$ is isomorphic to a direct product of Weil restrictions
$G_i=\prod_{B'_i/B}H_i$, where $B'_i/B$ is finite \'etale and $H_i$ is an adjoint simple group over $B'_i$, and
the factors $G_i$ cannot be decomposed further.
Let $S\le G$ be the 2-dimensional split subtorus embedded diagonally into the product of 2-dimensional split subtori
in $G_i$ that exist by assumption. By~\cite[Exp. XXVI, Proposition 6.1]{SGA3} there is a
parabolic $B$-subgroup $P$ of $G$ such that the centralizer $L=\Cent_G(S)$ of $S$ in $G$ is a Levi subgroup of $P$.
Since $S$ is contained in the center of $L$, and acts faithfully on every $G_i$,
by~\cite[Lemma 3.6]{St-serr} the system of relative roots $\Phi_P$ has rank $\ge 2$ over $B$.

\end{proof}

\section{Proof of the main results}\label{sec:main}

The following theorem proved in~\cite{Pa-GSfin1} extends~\cite[Theorem 7.1]{PaStV} to arbitrary reductive groups $G$
and arbitrary fields $k$.

\begin{thm}\label{thm:panin}\cite[Theorem 2.5]{Pa-GSfin1}\\
(i) Let $X$ be an affine $k$-smooth irreducible $k$-variety,
let $U=Spec(\mathcal O_{X,\{x_1,x_2,\dots,x_n\}})$ be the spectrum of the semilocal ring of $X$ at a finite
set of closed points $x_1,x_2,\ldots,x_n$, $n\ge 1$,
and let $0\neq f\in \mathcal O_{X}(X)$ be such that $x_1,\ldots,x_n\not\in X_f$.
Then, possibly after replacing $X$ by a smaller affine open neighborhood of $U$, one
can find a $U$-scheme $Y$ which is finite \'etale neighborhood
\begin{equation*}
\label{eq:neighb}
    \xymatrix{
     (\Aff^1_U)  && Y  \ar[ll]_{\tau}                                      &\\
       && U\ar[u]^\delta \ar[ull]_{0}  &\\
}
\end{equation*}
of the $U$-point $0\in \Aff^1_U$,
and a morphism $p:Y\to X$, such that $Y\xrightarrow{\tau} \Aff^1_U$
together with a principal open $(\Aff^1_U)_h\to \Aff^1_U$, where
$h\in O_{X,\{x_1,x_2,\dots,x_n\}}[t]$ is a monic polynomial with $h(1)\in(O_{X,\{x_1,x_2,\dots,x_n\}})^\times$,
form an elementary distinguished Nisnevich square of $U$-schemes in the sense of
\cite[Def. 3.1.3]{MoV},
and the following diagram commutes.
\begin{equation}
\label{eq:nis-square}
    \xymatrix{
       (\Aff^1_U)_{h}  \ar[d]_{inc} && Y_{\tau^*(h)} \ar[ll]_{\tau|_{Y_{\tau^*(h)}}}  \ar[d]_{inc} \ar[rr]^{p|_{Y_{\tau^*(h)}}} && X_f  \ar[d]_{inc}   &\\
     (\Aff^1_U)  && Y  \ar[ll]_{\tau} \ar[rr]^{p} && X                                     &\\
       && U\ar[u]^\delta \ar[ull]_{0} \ar@{=}[rr]&& U\ar[u]^{inc} &\\
    }
\end{equation}

(ii) Let $G$ be a reductive group scheme over $X$, and denote by $G_U$ its restriction to $U$. Then one can
choose $Y$ in (i) in such a way that, moreover, the base change $p^*(G)$ of $G$ to $Y$ is $Y$-isomorphic
to the restriction $\tau^*(H)$ to $Y$ of the ``constant'' group scheme $H=G_U\times_U\Aff^1_U$ over $\Aff^1_U$.

\end{thm}

\begin{thm}\label{thm:main}
Let $k$ be a field,
let $A$ be a semilocal ring of several closed points on a smooth irreducible $k$-variety, and let $K$ be the field of fractions of $A$.
Let $G$ be a reductive group over $A$ such that every semisimple normal subgroup of $G$ contains $(\Gm)^2$.
 Then for any commutative $k$-algebra $B$, the natural map
$$
K_1^G(A\otimes_k B)\to K_1^G(K\otimes_k B)
$$
has trivial kernel.
\end{thm}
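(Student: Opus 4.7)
The plan is to adapt Panin's geometric presentation approach used in~\cite{PaStV,Pa-GSfin1} for torsors to the non-stable $K_1$-functor setting. First, I would take $g \in G(A \otimes_k B)$ in the kernel, so that its image in $G(K \otimes_k B)$ lies in $E(K \otimes_k B)$. Since $K = \varinjlim_{0 \ne s \in A} A_s$ and tensor product commutes with filtered colimits, $g$ already lies in $E(A_s \otimes_k B)$ for some nonzero $s \in A$. A spreading-out argument then lets me shrink $X$ so that $G$ and the parabolic $P$ used to define $E$ are both defined over $X$, so that $g$ extends to some $g_X \in G(X \otimes_k B)$, and so that, after absorbing $s$ into a function $f \in k[X]$ vanishing at each $x_i$, one has $g_X|_{X_f \otimes_k B} \in E(X_f \otimes_k B)$.

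Next, I would apply Theorem~\ref{thm:panin} to $(X, U, f, G)$ to obtain the diagram~\eqref{eq:nis-square} together with $\delta: U \to Y$ and the group-scheme isomorphism $\Phi: p_U^* G_U \xrightarrow{\sim} p_X^* G$ with $\delta^* \Phi = \id_{G_U}$. Since $k$ is a field, base change along $k \to B$ preserves the distinguished Nisnevich square, so Lemma~\ref{lem:Nis-square} can be applied over $B$. The pivotal element is $\tilde g := \Phi^{-1}(p_X^* g_X) \in G_U(Y \otimes_k B)$: on $Y_h \otimes_k B$ the composition $p_X|_{Y_h}: Y_h \to X_f$ exhibits $(p_X^* g_X)|_{Y_{h,B}}$ as the pullback of the elementary element $g_X|_{X_f \otimes_k B}$, and since $\Phi^{-1}$ is a group-scheme isomorphism carrying parabolics to parabolics, $\tilde g|_{Y_{h,B}}$ is elementary, i.e.\ trivial in $K_1^G(Y_{h,B})$.

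Then, invoking Lemma~\ref{lem:Nis-square} on the pair $(e, \tilde g)$, which maps to the trivial element in $K_1^G(Y_{h,B})$, I obtain a lift $\hat g \in K_1^G((A \otimes_k B)[t])$ whose image in $K_1^G((A \otimes_k B)[t]_h)$ is trivial. Since $h \in A[t]$ is monic in $t$ (and remains monic over $A \otimes_k B$), Lemma~\ref{lem:inj-f} then forces $\hat g$ itself to be trivial in $K_1^G((A \otimes_k B)[t])$. Specialising at $t=0$ and using $\tau \circ \delta = i_0$, $p_X \circ \delta = \can$, and $\delta^* \Phi = \id_{G_U}$, one computes $\hat g(0) = \delta^*(\tau^* \hat g) = \delta^* \tilde g = g$ in $K_1^G(A \otimes_k B)$. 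Therefore $g$ is trivial there, as desired.

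The main obstacles I anticipate are, first, the spreading-out step, where one has to extend $G$, the chosen parabolic $P$, and the elementary factorisation of $g|_{A_s \otimes_k B}$ simultaneously to a common affine open neighbourhood of $\{x_1,\dots,x_n\}$ in $X$; and second, verifying that the isotropic-rank hypotheses of Lemmas~\ref{lem:inj-f} and~\ref{lem:Nis-square} transfer from $G$ over $A$ to $G$ over the auxiliary rings $(A \otimes_k B)[t]$ and $\mathcal{O}(Y) \otimes_k B$, i.e.\ that base change by $B$ does not introduce smaller semisimple normal subgroups of insufficient isotropic rank.
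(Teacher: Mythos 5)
Your proposal is correct and follows essentially the same route as the paper's own (much terser) proof: spread out to $X$ with $g$ elementary over $X_f\otimes_k B$, apply Theorem~\ref{thm:panin}, base-change the Nisnevich square by $B$, use Lemma~\ref{lem:Nis-square} to produce a class over $(A\otimes_k B)[t]$ killed by inverting the monic $h$, and conclude by Lemma~\ref{lem:inj-f} and specialization at $t=0$ via $\delta$ and $\Phi$. The details you supply (the pivotal element $\Phi^{-1}(p_X^*g_X)$, the pair $(e,\tilde g)$, and the computation $\hat g(0)=g$) are exactly what the paper's one-line invocation of Lemma~\ref{lem:Nis-square} leaves implicit.
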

\begin{proof}
Let $g\in\ker (K_1^G(A\otimes_k B)\to K_1^G(K\otimes_k B))$. By~\cite[Tag 01ZU]{Stacks}
there are a smooth irreducible affine $k$-variety
 $X=\Spec(C)$ and $f\in C$
such that
$A$ is a semilocal ring of several closed points on $X$. Since $G$ is finitely presented, and
$K_1^G$ commutes with filtered direct limits of rings,
we can assume that $B$ is a finitely generated $k$-algebra, $G$ is defined and
has isotropic rank $\ge 2$ over $C$, the element $g\in K_1^G(A\otimes_k B)$
is the image of an element $g_0\in\ker(K_1^G(C\otimes_k B)\to K_1^G(C_f\otimes_k B))$.
Apply Theorem~\ref{thm:panin} (i). Replace all schemes in the diagram~\eqref{eq:nis-square} by their fiber products
with $\Spec B$ over $\Spec k$.
This gives an element
$$
p^*(g_0)\in \ker\bigl(K_1^G(O_Y(Y)\otimes_k B)\to K_1^G(O_Y(Y)_{\tau^*(h)}\otimes_k B)\bigr),
$$
where we write $G$ instead of $p^*(G)$ thanks to
Theorem~\ref{thm:panin} (ii).
Note that the polynomial $h$ is non-nilpotent on every connected component of the Noetherian ring
$A[x]\otimes_k B=(A\otimes_k B)[x]$.
 By Lemma~\ref{lem:Nis-square} there is an element
$$
\tilde g\in\ker\left(K_1^G(A[x]\otimes_k B)\to K_1^G(A[x]_h\otimes_k B)\right),
$$
such that $\tau^*(\tilde g)=p^*(g_0)$.
 Since $h$ is a monic polynomial,
by Lemma~\ref{lem:inj-f} we have $\tilde g=1$.
By the commutativity of the diagram~\eqref{eq:nis-square} we have
$\tilde g|_{x=0}=\delta^*(p^*(g_0))=g$. Hence $g=1$.
\end{proof}

\begin{proof}[Proof of Theorem~\ref{thm:A1-inv}]
Clearly, we can assume that $k$ is a finite field or $\mathbb{Q}$
without loss of generality. The embedding $k\to A$ is geometrically regular, since $k$ is perfect~\cite[(28.M), (28.N)]{Mats}.
Then by Popescu's theorem~\cite{Po90,Swan} $A$ is a filtered direct limit of smooth
$k$-algebras $R$.
Since the group scheme $G$ and the unipotent radicals of its parabolic subgroups are finitely presented over $A$,
the functors $G(-)$ and $E(-)=E_P(-)$ commute with filtered direct limits. Hence we can replace $A$ by
a smooth $k$-algebra $R$. By the local-global principle Lemma~\ref{lem:PS-17}, to show that $K_1^G(R)=K_1^G(R[x])$,
it is enough to show that for every maximal localization $R_m$ of $R$. Let $K$ be the field of fractions of $R_m$.
By Theorem~\ref{thm:main} the maps $K_1^G(R_m)\to K_1^G(K)$ and $K_1^G(R_m[x])\to K_1^G(K[x])$ are injective.
By~\cite[Theorem 1.2]{St-poly} $K_1^G(K)=K_1^G(K[x])$. Hence $K_1^G(R_m)=K_1^G(R_m[x])$, and we are done.
\end{proof}

\begin{proof}[Proof of Theorem~\ref{thm:inj}]
As in the proof of Theorem~\ref{thm:A1-inv} we are reduced to the case where $A$ is a filtered direct limit of smooth
$k$-algebras $R$, and $G$ is defined and has isotropic rank $\ge 2$ over $R$. Also, since $A$ is a semilocal domain,
we can assume that $R$ is a domain with a fraction field $L$,
and an element $g\in\ker(K_1^G(A)\to K_1^G(K))$ comes from an element $g'\in K_1^G(R)$
that vanishes in $K_1^G(L)$. By Theorem~\ref{thm:main}, $g'$ vanishes in every semilocalization of $R$ at a finite
set of maximal ideals, and hence in every semilocalization of $R$ at a finite
set of prime ideals. Since the map $K_1^G(R)\to K_1^G(A)$ factors through such a semilocalization of $R$, it
follows that $g=1$.
\end{proof}

\begin{cor}\label{cor:Lau}
Let $A$ be a semilocal regular domain containing a field $k$.
 Let $G$ be a simply connected semisimple group scheme over $A$ of isotropic rank $\ge 2$.
Then
$K_1^G(A)=K_1^G(A[x_1^{\pm 1},\ldots,x_n^{\pm 1}])$.
\end{cor}
\begin{proof}
Recall that for any field $K$ and any simply connected semisimple $G$ of isotropic rank $\ge 2$ over $K$ one has
$K_1^G(K[x_1^{\pm 1},\ldots,x_n^{\pm 1}])\cong K_1^G(K)$ by~\cite{St-poly}.
Let $K$ be the field of fractions of $A$.
Then the claim follows from
Theorem~\ref{thm:main} exactly as Theorem~\ref{thm:inj}, via the following diagram:
\begin{equation*}
\xymatrix{
K_1^G(A[x_1^{\pm 1},\ldots,x_n^{\pm 1}])\ar[rrr]^{\hspace{28pt}  x_1=\ldots=x_n=1}\ar[d] &&& K_1^G(A)\ar[d]^{}  &\\
K_1^G(K[x_1^{\pm 1},\ldots,x_n^{\pm 1}])\ar[rrr]^{\hspace{28pt} x_1=\ldots=x_n=1}&&& K_1^G(K).
}
\end{equation*}
\end{proof}

\begin{rem}
It is clear that Theorem~\ref{thm:panin} can be applied to deduce analogs of Theorems~\ref{thm:A1-inv} and~\ref{thm:inj}
for any reasonably good functor defined in terms of a reductive group scheme and satisfying properties similar to
Lemmas~\ref{lem:inj-f},~\ref{lem:PS-17},~\ref{lem:Nis-square}. The paradigmal example is the functor $H^1_{\et}(-,G)$,
in which case Theorem~\ref{thm:inj} corresponds to the Serre--Grothendieck conjecture
(the non-isotropic cases involve additional modification of the counterpart of Lemma~\ref{lem:inj-f}).
One can axiomatize this approach similarly to the ``constant'' case~\cite[Th\'eor\`eme 1.1]{CTO}, however, the axioms
are, naturally, more cumbersome.
\end{rem}

\section{Power series rings}\label{sec:ser}

We will need the following analog of a well-known theorem of Quillen on projective modules~\cite[Theorem 3]{Q}.

\begin{thm}\cite[Theorem 1.3]{PaStV}\label{thm:H1-f}
Let $B$ be a commutative ring, and let $G$ be a simply connected semisimple group over $B$ of isotropic rank
$\ge 1$. Let $f\in B[x]$ be a monic polynomial. Then the natural map of \'etale
cohomology sets
$H^1_{\et}(B[x],G)\to H^1_{\et}(B[x]_f,G)$
has trivial kernel.
\end{thm}
\begin{proof}
Assume first that $B$ is a local ring.
Since $G$ is simply connected, similarly to the adjoint case,
by~\cite[Exp. XXIV, Proposition 5.10]{SGA3} implies that $G$ is isomorphic to a direct product of Weil restrictions
$G_i=\prod_{B'_i/B}H_i$, where $R'_i/R$ is finite \'etale and $H_i$ is a simply connected simple group over $B'_i$,
i.e. the Dynkin diagram of $H_i$ over every geometric point
of $\Spec(B'_i)$ is irreducible. Again as in the adjoint case (see~\S 2), the assumption that $G$ has sotropic rank $\ge 1$
implies that $H_i$ contains $\Gm_{,B'_i}$. Then
$H^1_{\et}(B'_i[x],G)\to H^1_{\et}(B'_i[x]_f,H_i)$ has trivial kernel by~\cite[Proposition 5.2.2 (i)]{Ces-GS}. Then
the same claim for $G$ follows from a version of Faddeev--Shapiro's lemma~\cite[Exp. XXIV, Proposition 8.4]{SGA3}.

Now assume that $B$ is arbitrary. Since $G$ is finitely presented, and $H^1_{\et}(-,G)$ commutes with
filtered direct limits~\cite{Margaux-Chech}, we can assume that $B$ is Noetherian.
Since $G$ is semisimple, it is $B$-linear by~\cite[Corollary 3.2]{Thomason}. Then Lemma~\ref{lem:loc-f} below
finishes the proof.
\end{proof}

\begin{lem}\label{lem:loc-f}
Let $B$ be a Noetherian ring, and let $G$ be a $B$-linear flat group scheme.
Let $f\in B[x]$ be a monic polynomial.
Assume that for every maximal ideal $m$ of $B$
the natural maps of \'etale
cohomology sets $H^1_{\et}(B_m[x],G)\to H^1_{\et}(B_m[x]_{f},G)$
and $H^1_{\et}(B_m[x],G)\to H^1_{\et}(B_m[x]_x,G)$ have trivial kernels.
Then the map $H^1_{\et}(B[x],G)\to H^1_{\et}(B[x]_f,G)$
has trivial kernel.
\end{lem}
\begin{proof}
Let $\xi\in H^1_{\et}(B[x],G)$ be in the kernel.
Set $y=x^{-1}$ and choose $g(y)\in B[y]$ so that $x^{\deg(f)}g(y)=f(x)$. Then $g(0)=1\in B^\times$, and
$B[x]_{xf}=B[y]_{yg}$.
Hence $\Spec(B[y])=\Spec(B[y]_y)\cup\Spec(B[y]_g)$.
Extend $\xi|_{\Spec(B[x]_x)}=\xi|_{\Spec(B[y]_y)}$ to a bundle $\eta$ on $\Spec(B[y])$
by gluing it to the trivial bundle on $\Spec(B[y]_g)$ (this can be done e.g. by~\cite[Proposition 2.6 (iv)]{CTO}).
By assumption, for any maximal ideal $m$ of $B$ the $G$-bundle $\xi|_{B_m[x]}$ is trivial, hence
$\eta|_{\Spec(B_m[y]_y)}$ is trivial. Then, again by assumption, $\eta|_{B_m[y]}$ is trivial.
Since $G$ is $B$-linear, by~\cite[Theorem 3.2.5]{AHW}
the fact that for any maximal ideal $m$ of $B$ the $G$-bundle $\eta|_{B_m[y]}$ is trivial implies
that  $\eta$ is extended from $B$. However, $\eta$ is trivial at $y=0$ by construction,
so $\eta$ is trivial. Hence $\xi$ is trivial at $x=y=1$. Since $\xi|_{B_m[x]}$ is also trivial for any $m$,
by~\cite[Theorem 3.2.5]{AHW} $\xi$ is also extended from $B$. Hence $\xi$ is trivial.
\end{proof}

\begin{rem}
Theorem~\ref{thm:H1-f} was first established in~\cite[Theorem 1.3]{PaStV}
with the following additional assumptions. First, $B$ was assumed to be a
Noetherian $k$-algebra, where $k$ was an arbitrary field. Second, $G$ was assumed to be simple (that is, the root system of
$G$ over any geometric point of $\Spec(B)$ is irreducible). Third, it was assumed that
$f(1)\in B^\times$. The assumptions that $B$ is Noetherian and $G$ is simple are unsubstantial, as explained
in our proof of Theorem~\ref{thm:H1-f}.
The fact that $B$ is a $k$-algebra was used in the proof of~\cite[Theorem 1.3]{PaStV} at one point only. Namely,
in~\cite[Lemma A.1]{PaStV}, we referred to the classical results of W. J. Haboush~\cite{Hab} and M. Nagata~\cite{Na} that a quotient of
a split reductive group over a field $k$ by a reductive subgroup is representable by an affine scheme. It was observed
in~\cite{Ces-GS} that this result
was generalized to arbitrary reductive group schemes over Noetherian rings by C. S. Seshadri~\cite{Ses-geomred} and
J. Alper~\cite[9.4.1 and 9.7.5]{Alp}.
The condition $f(1)\in B^\times$ was used in the proof of~\cite[Theorem 1.3]{PaStV} in order to deduce the case
of a not necessarily local ring $B$ from the local ring case, the latter being established without this assumption.
This argument is replaced by the new Lemma~\ref{lem:loc-f}.
\end{rem}

\begin{cor}\label{cor:Lau-square}
Let $B$ be a commutative ring, and let $G$ be a simply connected semisimple group over $B$ of isotropic rank
$\ge 1$. Then $G\bigl(B((x))\bigr)=G(B[[x]])G(B[x^{\pm 1}])$. If, moreover, $G$ has isotropic rank $\ge 2$,
then the sequence of pointed sets
$$
1\longrightarrow K_1^G(B[x])\xrightarrow{\st g\mapsto (g,g)} K_1^G(B[[x]])\times K_1^G(B[x,x^{-1}])
\xrightarrow{\st (g_1,g_2)\mapsto g_1{g_2}^{-1}} K_1^G\bigl(B((x))\bigr)\to 1
$$
is exact.
\end{cor}
\begin{proof}
Take an element $g\in G\bigl(B((x))\bigr)$.
The schemes $\Spec(B[x])$, $\Spec\bigl(B((x))\bigr)$, $\Spec\bigl(B[[x]]\bigr)$ and $\Spec(B[x^{\pm 1}])$
together form a patching square for $G$-torsors~\cite[Lemma 2.2.11]{BC}.
Then one can patch $G_{B[[x]]}$ and $G_{B[x^{\pm 1}]}$ by means of a shift by $g$ to obtain a $G$-torsor over $B[x]$.
Since $G$ is smooth, this torsor is \'etale-locally trivial~\cite[11.7]{Gr-BrauerIII}.
By Theorem~\ref{thm:H1-f} this torsor is trivial, since its restriction to $B[x^{\pm 1}]$ is trivial.
Therefore, $g\in G(B[[x]])G(B[x^{\pm 1}])$.

The second claim follows from the first and~\cite[Corollary 3.2]{St-serr}.
\end{proof}

The following result, in particular, generalizes a theorem of Ph. Gille~\cite[Th\'eor\`eme 5.8]{Gil} that
$K_1^G(k)=K_1^G\bigl(k((x))\bigr)$ for any isotropic simply connected semisimple group $G$ over a field $k$.

\begin{thm}\label{thm:semiloc-reg}
Let $B$ be a semilocal regular ring containing a field, and let $G$ be a simply
connected semisimple group over $B$ of
isotropic rank $\ge 2$. Then
$$
K_1^G(B)=K_1^G\bigl(B((x))\bigr)=K_1^G(B[[x]])=K_1^G(B[x])=K_1^G(B[x^{\pm 1}]).
$$
\end{thm}
\begin{proof}
Since $K_1^G$ commutes with finite products, we can assume that $B$ is a domain.
It follows from Theorem~\ref{thm:A1-inv} and Corollary~\ref{cor:Lau} that
$K_1^G(B)=K_1^G(B[x])=K_1^G(B[x^{\pm 1}])$.

Let $K$ be the field of fractions of $B$. Since $B[[x]]$ and $K[[x]]$ are semilocal regular rings containing a field,
by Theorem~\ref{thm:main} the maps $K_1^G(B[[x]])\to K_1^G\bigl(K((x))\bigr)$ and
$K_1^G(K[[x]])\to K_1^G\bigl(K((x))\bigr)$ are injective. Consequently, also the map
$K_1^G(B[[x]])\to K_1^G\bigl(K[[x]]\bigr)$ is injective.
By~\cite[Lemme 4.5, p. 983--15]{Gil} one has
$$
\ker\bigl(G(K[[x]])\xrightarrow{x=0}G(K)\bigr)\subseteq E\bigl(K((x))\bigr),
$$
and hence $K_1^G(K[[x]])=K_1^G(K)$. It follows that $K_1^G(B[[x]])\xrightarrow{x=0}K_1^G(B)$ is injective,
and hence $K_1^G(B[[x]])=K_1^G(B)$. Finally, since
$K_1^G(B)\to K_1^G(B[[x]])\to K_1^G(K((x)))$ is injective, it follows that $K_1^G(B)\to K_1^G\bigl(B((x))\bigr)$ is
injective.

By Corollary~\ref{cor:Lau-square} we have $G(B((x)))=G(B[x^{\pm 1}])G(B[[x]])$, therefore, $K_1^G(B[[x]])=K_1^G(B)$
implies that the map $K_1^G(B)\to K_1^G\bigl(B((x))\bigr)$ is surjective.
\end{proof}

\section{$\Aff^1$-homotopic interpretation of $K_1^G$}

Let $A$ be any commutative ring, $\Sm_A$ be the category of finitely presented smooth $A$-schemes,
and $\Sm_A^{\mathrm{aff}}$ be the full subcategory of affine schemes.

For any presheaf $F$ on $\Sm_A$, we denote by $\Sing_{\bullet}^{\A1}(F)$ the simplicial presheaf
$U\mapsto F(\Delta^*\times U)$, where $\Delta^*$ is the standard cosimplicial object made of affine
spaces (see e.g.~\cite[p. 88]{MoV}).

\begin{dfn}
Let $G$ be a group-valued presheaf on $\Sm_A$. For any $n\ge 1$, define \emph{the
$n$-th Karoubi--Villamayor $K$-theory functor associated to $G$}
to be the presheaf on $\Sm_A$ given by
$$
KV_n^G(U)=\pi_{n-1}\bigl(\Sing_{\bullet}^{\A1}(G)(U)\bigr).
$$
\end{dfn}

This definition goes back to J. F. Jardine~\cite{J} who
defined the non-stable Karoubi--Villamyor $K$-theory associated to a group valued
functor on a category of $k$-algebras, where $k$ is any commutative unitary ring,
similarly to Gersten's definition of the usual Karoubi--Villamayor $K$-theory.
For any commutative $A$-algebra $R$, one can explicitly compute $KV_1^G(R)$ as follows:
$$
KV_1^G(R)=G(R)/\{g\in G(R)\ |\ \exists\ h(x)\in G(R[x])\ :\ h(0)=1,\ h(1)=g\}.
$$
Let $G$ be a reductive group scheme over $A$, and let $P$ be a proper parabolic subgroup of $G$, and let $U_P$ be the
unipotent radical of $P$.
By~\cite[Exp. XXVI Corollaire 2.5]{SGA3} $U_P$ is $A$-schematically isomorphic to the canonical scheme of a projective
$A$-module $V$, with the point $0\in V$ corresponding to $1\in U_P(A)$. Consequently, every $g\in U_P(R)$ corresponds to an element $v\in V\otimes_A R$,
and then $vx\in V\otimes_A R[x]$ corresponds to $h(x)\in U_P(R[x])$ such that $h(0)=1$, $h(x)=g$.
This implies that there is a natural map of pointed sets
$$
K_1^{G,P}(R)=G(R)/E_P(R)\to KV_1^G(R).
$$
If $G$ has isotropic rank $\ge 2$, this map becomes a group homomorphism
$$
K_1^G(R)\to KV_1^G(R).
$$

A. Asok, M. Hoyois and M. Wendt proved the following ``affine representability'' result for $KV_1^G$ in the
Morel--Voevodsky $\Aff^1$-homotopy category $\Hcat_A$ over $A$, which is the homotopy category
of the category of simplicial sheaves in the Nisnevich topology on $\Sm_A$ with respect to the $\Aff^1$-model
structure~\cite[p. 109]{MoV}. Note that the definition of $\Hcat_A$ used in~\cite{AHW} and its prequel~\cite{AHW-1}
is different from the definition of~\cite{MoV}, however, the two categories are equivalent under the
assumption that $A$ is Noetherian and has finite Krull dimension~\cite[Remarks 3.1.4 and 5.1.1]{AHW-1}.

\begin{thm}\cite[Theorem 2.4.2]{AHW}\label{thm:AHW}
Let $A$ is a Noetherian scheme of finite Krull dimension.
Let $G$ be  finitely presented smooth $A$-group scheme such that
the natural map $H^1_{Nis}(X,G)\to H^1_{Nis}(\Aff^1_X,G)$ is bijective for all $X\in\Sm_A^{\mathrm{aff}}$.
Then for all $X\in\Sm_A^{\mathrm{aff}}$ and $n\ge 0$ the canonical map
$KV_1^G(X)\to\Hom_{\Hcat_A}(X,G)$ is bijective.
\end{thm}

If $A$ is a regular ring containing a field, then
the condition on Nisnevich cohomology in Theorem~\ref{thm:AHW} is satisfied for reductive $A$-groups
of isotropic rank $\ge 1$. This follows from
the Serre--Grothendieck conjecture~\cite{Pa-izvGS} and the extended version of the Serre--Grothendieck
conjecture for simply connected isotropic groups~\cite[Theorem 2.6]{Pa-GSfin1}. The next three statements
provide details of this implication.

\begin{lem}\label{lem:4maps}
Let $A$ be a regular semilocal domain, let $K$ be the fraction field of $A$, and let $G$ be a reductive group scheme over $A$.
Let $\der(G)$ be the derived group of $G$ in the sense of~\cite{SGA3}, and let
$G^{sc}$ be the simply connected semisimple group corresponding to $\der(G)$. Assume that
the three maps
\begin{equation}\label{eq:ma}
H^1_{\et}(A,G)\to H^1_{\et}(K,G),
\end{equation}
\begin{equation}\label{eq:mb}
H^1_{\et}(A,\der(G))\to H^1_{\et}(K,\der(G)),
\end{equation}
\begin{equation}\label{eq:mc}
H^1_{\et}(A[x_1,\ldots,x_n],G^{sc})\to H^1_{\et}(K[x_1,\ldots,x_n],G^{sc})
\end{equation}
 have trivial kernels. Then the map
\begin{equation}\label{eq:md}
H^1_{\et}(A[x_1,\ldots,x_n],G)\to H^1_{\et}(K[x_1,\ldots,x_n],G)
\end{equation}
has trivial kernel.
\end{lem}
\begin{proof}
There are two short exact sequences of reductive $A$-groups
$$
1\to\der(G)\to G\to\corad(G)\to 1
$$
and
$$
1\to C\to G^{sc}\to \der(G)\to 1,
$$
where $\corad(G)$ and $C$ are $A$-groups of multiplicative type~\cite[Exp. XXII]{SGA3}. By~\cite[Lemma 4.1]{St-dh}
the second sequence and the fact that~\eqref{eq:mb} and~\eqref{eq:mc} have trivial kernels imply that
\begin{equation}\label{eq:me}
H^1_{\et}(A[x_1,\ldots,x_n],\der(G))\to H^1_{\et}(K[x_1,\ldots,x_n],\der(G))
\end{equation}
has trivial kernel. Also by~\cite[Lemma 4.1]{St-dh} the first sequence and the fact that~\eqref{eq:ma} and~\eqref{eq:me}
have trivial kernels imply that~\eqref{eq:md} has trivial kernel.
\end{proof}

\begin{lem}\label{lem:nis}
Let $A$ be a regular semilocal domain containing a field $k$, let $K$ be the fraction field of $A$, and let $G$ be
a reductive group scheme over $A$ of isotropic rank $\ge 1$. Then the natural map $H^1_{\et}(A[x],G)\to H^1_{\et}(K[x],G)$
has trivial kernel.
\end{lem}
\begin{proof}
By Lemma~\ref{lem:4maps} it is enough to check that~\eqref{eq:ma},~\eqref{eq:mb},~\eqref{eq:mc} have trivial kernels.
By the Serre--Grothendieck conjecture over $A$~\cite[Theorem 1.1]{Pa-izvGS}
the maps~\eqref{eq:ma} and~\eqref{eq:mb} have trivial kernels. Since $G^{sc}$ has isotropic rank $\ge 1$,
by~\cite[Theorem 2.6]{Pa-GSfin1} the map~\eqref{eq:mc} has trivial kernel,
under the additional assumption that $A$ is a semilocal ring of several closed points on
a $k$-smooth irreducible affine variety $X$. Using Popescu's theorem as in the proof of Theorem~\ref{thm:A1-inv},
and the fact that $H^1_{\et}(-,G)$ commutes with filtered direct limits~\cite{Margaux-Chech}, we conclude
that~\eqref{eq:mc} has trivial kernel for any regular semilocal domain $A$ containing a field.
\end{proof}

\begin{cor}\label{cor:nis}
Let $A$ be a regular ring containing a field $k$, and let $G$ be
a reductive group scheme over $A$ of isotropic rank $\ge 1$. Then the natural map $H^1_{Nis}(A,G)\to H^1_{Nis}(A[x],G)$
is bijective.
\end{cor}
\begin{proof}
It is enough to show that any Nisnevich $G$-torsor over $A[x]$ is extended from $A$.
By~\cite[Corollary 3.2]{Thomason} $G$
is linear, hence by the local-global principle for torsors~\cite[Theorem 3.2.5]{AHW} (see also~\cite[Korollar 3.5.2]{Mo})
it is enough to prove that every $G$-torsor over $A_m[x]$ is extended from $A_m$,
for every maximal localization $A_m$ of $A$. Thus, we can assume from the start that $A$ is regular local.

Next, for every regular local ring $A$ containing a field, we show that every Nisnevich $G$-torsor over $A[x]$ is
in fact trivial, and hence extended. By Lemma~\ref{lem:nis}
$H^1_{\et}(A[x],G)\to H^1_{\et}(K[x],G)$ has trivial kernel, where $K$ is the fraction field
of $A$.
By~\cite[Proposition 2.2]{CTO} $H^1_{\et}(K[x],G)\to H^1_{\et}(K(x),G)$ has trivial kernel.
Since $H^1_{Nis}(K(x),G)=1$, every Nisnevich $G$-torsor over $A[x]$ is trivial.
\end{proof}

\begin{thm}\label{thm:repr}
Let $A$ be a regular ring of finite Krull dimension and containing a field $k$.
Let $G$ be a reductive group scheme over $A$ of isotropic rank $\ge 2$. Then the canonical
map $K_1^G(A)\to \Hom_{\Hcat_A}(\Spec(A),G)$ is bijective.
\end{thm}
\begin{proof}
The map $K_1^G(A)\to KV_1^G(A)$ is
bijective by~\cite[Lemma 3.3]{St-poly}, given that $K_1^G(A[x])\cong K_1^G(A)$ by Theorem~\ref{thm:A1-inv}.
The map $KV_1^G(A)\to\Hom_{\Hcat_A}(\Spec(A),G)$
is bijective by Theorem~\ref{thm:AHW} and Corollary~\ref{cor:nis}.
\end{proof}

\renewcommand{\refname}{References}

\end{document}